\documentclass[letterpaper,11pt,oneside]{amsart}
\usepackage{amsmath}\usepackage{amssymb}\usepackage{latexsym}\usepackage{amsfonts}
\usepackage[utf8]{inputenc}\usepackage[english]{babel}\usepackage{multicol}
\usepackage{oldgerm}\usepackage{enumerate}\setlength{\parindent}{0.5cm}

\setlength{\topmargin}{0.8cm} \setlength{\textheight}{21.5cm}
\setlength{\oddsidemargin}{1cm} \setlength{\evensidemargin}{0.0in}
\setlength{\textwidth}{15cm}

\newtheorem{theorem}{Theorem}  %\newtheorem{figura}{Figure}
\newtheorem{corollary}{Corollary}  \newtheorem{example}{Example}
\newtheorem{lemma}{Lemma} \newtheorem{proposition}{Proposition}

\newcommand{\mA}{\mathcal{A}} \newcommand{\mB}{\mathcal{B}}
\newcommand{\cF}{\mathbb{F}} \newcommand{\cZ}{\mathbb{Z}} 
%%%%%%%%%%%%%%%%%%%%%%%%%%%%%%%%%%%%%%%%%%%%%%%%%%%%%%%%%%%%%%%%%%%%%%%%%%%%%%%%%%%%%%%

\begin{document}

\title{Construction of $B_h[g]$ sets in product of groups}

\author{Diego Ruiz and Carlos Trujillo}%, and Alfredo G?mez}
\noindent\address{Departamento de Matemáticas -- Universidad del Cauca\newline Calle 5 No. 4--70 -- Popayán, Colombia.\newline Grupo de Investigación: Álgebra, Teoría de Números y Aplicaciones, ERM -- ALTENUA.}
\email{dfruiz@unicauca.edu.co}
\email{trujillo@unicauca.edu.co}
%\email{agomez@unicauca.edu.co}

\maketitle
\vspace{-0.7cm}
\centerline{\textsc{Department of Mathematics}}
\centerline{\textsc{Universidad del Cauca}}
\centerline{\textsc{Popayán--Colombia}}
\centerline{\small{dfruiz@unicauca.edu.co, trujillo@unicauca.edu.co}}
\vspace{0.3cm}

\begin{abstract}
A subset $\mA$ of an abelian group $G$ is a $B_h[g]$ set on $G$ if the elements of $G$ can be written in at most $g$ ways as sum of $h$ elements of $\mA$. Given any field $\cF$, this work presents constructions of $B_h[g]$ sets on the abelian groups $\left(\cF^h,+\right)$, $\left(\cZ^d,+\right)$, and $\left(\cZ_{m_1}\times\cdots\times\cZ_{m_d},+\right)$, for $d\geq2$, $h\geq2$, and $g\geq1$.
\medskip

\noindent\textit{Keywords and phrases.} \,Sidon sets, $B_h[g]$ sets.

\noindent\textit{2000 Mathematics Subject Classification.}\, 11B50, 11B75.

\vspace{0.5cm}

\end{abstract}

%%%%%%%%%%%%%%%%%%%%%%%%%%%%%%%%%%%%%%%%%%%%%%%%%%%%%%%%%%%%%%%%%%%%%%%%%%%%%%%%%%%%%%%%%%%%%%%%%%%%%%%%%
%%%%%%%%%%%%%%%%%%%%%%%%%%%%%%%%%%%%%%%%%%%%%%%%%%%%%%%%%%%%%%%%%%%%%%%%%%%%%%%%%%%%%%%%%%%%%%%%%%%%%%%%%

\section{Introduction}
Let $g$ and $h$ denote positive integers with $h\geq2$. Let $G$ be an abelian additive group denoted by $(G,+)$. The set $\mA=\{a_1,\ldots,a_k\}\subseteq G$ is a $B_h[g]$ set on $G$ if every element of $G$ can be written in at most $g$ ways as sum of $h$ elements in $\mA$, that is, if given $x\in G$, the solutions of the equation $x=a_1+\cdots+a_h$, with $a_1,\ldots,a_h\in\mA$, are at most $g$ (up to rearrangement of summands). If $g=1$, $\mA$ is a $B_h$ set, while if $g=1$ and $h=2$, $\mA$ is a Sidon set.

Let $F_h(G,g)$ denote the largest cardinality of a $B_h[g]$ on $G$. If $g=1$ we write $F_h(G)$. Furthermore, if $G$ is the direct product of $d~\geq2$ abelian groups and $\mA$ is a $B_h[g]$ set on $G$, sometimes we say that $\mA$ is a $d-$dimensional $B_h[g]$ set on $G$. For $N\in\mathbb{N}$, let $[0,N-1]:=\{0,1,\ldots,N-1\}$. If $\cZ^d$ denotes the set of all $d-$tuples of integer numbers and $[0,N-1]^d$ denotes the cartesian product of $[0,N-1]$ with itself $d$ times, we define%$G=\left(\mathbb{Z}^d,+\right)$ we define
\begin{equation*}
    F_h^d(N,g):=\max\{|\mA|:\mA\subseteq[0,N-1]^d,\,\mA\in B_h[g]\}.
    \label{funcionMaximal}
\end{equation*}

The main problem on $B_h[g]$ sets consists on establishing the largest cardinality of a $B_h[g]$ set on a finite group $G$. With analytical constructions it is possible to characterize lower bounds for $F_h(G,g)$, while using counting and combinatorial techniques, it is possible to characterize upper bounds. In this work we focus on constructions to obtain known lower bounds for $F_h(G,g)$ on particular groups $G$ ($\left(\cF^h,+\right)$, $\left(\cZ^d,+\right)$, and $\left(\cZ_{m_1}\times\cdots\times\cZ_{m_d},+\right)$ for any field $\cF$ and $d\geq2$, $h\geq2$, $g\geq1$), while other works are focused on upper bounds \cite{RuizBravoTrujillo}, \cite{CillerueloNd}, \cite{HigherDimensions}.

Different works have introduced constructions of $B_h[g]$ sets for particular values of $h$, and $g$. On $(\cZ,+)$, the most obvious construction of Sidon sets is given by Mian--Chowla using the greedy algorithm \cite{MianChowla}. This result is generalized by O'Bryant for any $h\geq 2$ and any $g\geq 1$ in \cite{OBryant}.

Other constructions of $B_h$ sets are due to Rusza, Bose, Singer, and Erd\"{o}s \& Tur\'{a}n. Rusza constructs a Sidon set on the group $\left(\mathbb{Z}_{ (p^2-p)},+\right)$ for $p$ prime. Bose's construction initially consider $h=2$ but could be generalized for any $h\geq2$ and any prime power $q$ on the group $\left(\mathbb{Z}_{q^h-1},+\right)$. Similarly to Bose, Singer constructs a $B_h$ set with $q+1$ elements on $\left(\mathbb{Z}_{(q^{h+1}-1)/(q-1)},+\right)$. Actually this construction can be established using Bose's construction \cite{TrujilloAlexis}. Finally, based on quadratic residues modulo a fixed prime $p$, Erd\"{o}s \& Tur\'{a}n construct Sidon sets on $(\mathbb{Z},+)$ \cite{OBryant}.

In dimension $d=2$ some constructions are due to Welch, Lempel, Golomb~\cite{Golomb}, Trujillo \cite{TrujilloDoc}, and C. Gómez \& Trujillo \cite{TrujilloAlexis}. Welch constructs Sidon sets with $p-1$ elements on the groups $\left(\cZ_{p-1}\times\cZ_p,+\right)$, $\left(\cZ_{p}\times\cZ_{p-1},+\right)$, generalized in \cite{UIS} to the groups $\left(\cZ_{q-1}\times \cF_q,+\right)$ and $\left(\cF_q\times \cZ_{q-1},+\right)$, respectively, where $\mathbb{F}_q$ is the finite field with $q$ elements. Golomb constructs Sidon sets with $q-2$ elements on the group $\left(\mathbb{Z}_{q-1}\times \mathbb{Z}_{q-1},+\right)$ (Lempel's construction is a particular case of Golomb). Trujillo in \cite{TrujilloDoc} presents an algorithm to construct Sidon sets on $\left(\mathbb{Z}\times \mathbb{Z},+\right)$ from a given Sidon set on $\left(\mathbb{Z},+\right)$. Finally, C. Gómez \& Trujillo construct $B_h$ sets on $\left(\mathbb{Z}_p\times \mathbb{Z}_{p^{h-1}-1},+\right)$ \cite{TrujilloAlexis}.

In higher dimensions, Cilleruelo in \cite{CillerueloNd} presents a way of mapping Sidon sets in $\mathbb{N}$ to Sidon sets in $\mathbb{N}^d$ for $d\geq2$, from which is possible to obtain a relation between the functions $F_h(N^d)$ and $F_h^d(N)$.

In this work we present constructions of $d-$dimensional $B_h[g]$ sets ($d\geq2$) on special abelian groups. The first construction uses the elementary symmetric polynomials and the Newton's identities to generalize a construction done initially for $d=2$ \cite{CillerueloDiablo}. In the second construction we generalize Trujillos's algorithm given in \cite{TrujilloDoc} to any dimension $d$ and all $h\geq 2,\,g\geq 1$, obtaining lower bounds for $F_h^d(N,g)$ from a known lower bounds for $F_h(N^d,g)$. Finally, using a homomorphism between abelian groups, we construct $d-$dimensional $B_h[g^\prime]$ sets from $d-$dimensional $B_h[g]$ sets, with $g$ a divisor of $g^\prime$.

\noindent The remainder of this work is organized as follows: For any finite field $\cF$, Section~\ref{Cons_xh} describes a construction of $B_h$ sets on $\left(\cF^h,+\right)$, where $\cF^h$ denotes the set of all $h-$tuples of elements of $\cF$. Section~\ref{ConsTrujillo} presents a construction of $B_h[g]$ sets on $\left(\cZ^d,+\right)$, and in Section~\ref{ConsNuestra} we construct $B_h[g]$ sets on $\left(\cZ_{m_1}\times\cdots\times\cZ_{m_d},+\right)$. Furthermore, we present a generalization of a Golomb Costas array construction. Finally, Section 4 describes the concluding remarks of this work. 

\section{Construction of $B_h$ sets on $\left(\cF^h,+\right)$}
\label{Cons_xh}
Let $p$ be a prime number. Note that $\mA:=\{(x,x^2):x\in\mathbb{Z}_p\}$ is a $B_2$ set on $\left(\mathbb{Z}_p\times\mathbb{Z}_{p},+\right)$ \cite{CillerueloDiablo}. In this section we generalize this construction using $h-$tuples ($h>2$). First we introduce the following notations and definitions.

Let $n$ be a positive integer. The elementary symmetric polynomials in the variables $x_1,\ldots,x_n$, written by $\sigma_k(x_1,\ldots,x_n)$ for $k=1,\ldots,n$, is defined as
\begin{equation*}
    \sigma_k(x_1,\ldots,x_n):=\sum\limits_{1\leq j_{1}<\cdots< j_{k}\leq n}x_{j_{1}}\cdots x_{j_{n}}.
    \label{EqElemSimePol}
\end{equation*}
If $k=0$ we consider $\sigma_0(x_1,\ldots,x_n)=1$. For $n=3$ we have
\begin{align*}
    \sigma_0(x_1,x_2,x_3)&=1,\\
    \sigma_1(x_1,x_2,x_3)&=x_1+x_2+x_3,\\
    \sigma_2(x_1,x_2,x_3)&=x_1x_2+x_1x_3+x_2x_3,\\
    \sigma_3(x_1,x_2,x_3)&=x_1x_2x_3.
\end{align*}
Note that the elementary symmetric polynomials appear in the expansion of a linear factorization of a monic polynomial
\[
    \prod_{j=1}^{n}(\lambda-x_j)=\sum_{k=0}^n (-1)^k\sigma_k(x_1,\ldots,x_n)\lambda^{n-k}.
\]
Note also that if $p_k(x_1,\ldots,x_n)=x_1^k+\cdots+x_n^k$, the Newton's identities are given by
\begin{equation}
    k\sigma_k(x_1,\ldots,x_n)=\sum_{i=1}^k(-1)^{i-1}\sigma_{k-i}(x_1,\ldots,x_n)p_i(x_1,\ldots,x_n),
    \label{EqNewton}
\end{equation}
for each $1\leq k\leq n$ and for an arbitrary number $n$ of variables.
\begin{theorem}\label{TheCons_xh}
Let $\cF$ be a field with characteristic zero or $p>h$. The set
\[
    \mA:=\{(x,x^2,\ldots,x^h):x\in \cF\},
\]
is a $B_h$ set on $\left(\cF^h,+\right)$.
\end{theorem}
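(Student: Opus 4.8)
The plan is to show that the map $x\mapsto(x,x^2,\ldots,x^h)$ sends distinct field elements to a configuration where any $h$-fold sum determines the underlying multiset of inputs uniquely, hence no nontrivial collisions occur. Concretely, suppose we have two representations of the same element of $\cF^h$ as a sum of $h$ elements of $\mA$:
$$\sum_{j=1}^h (x_j,x_j^2,\ldots,x_j^h)=\sum_{j=1}^h (y_j,y_j^2,\ldots,y_j^h),$$
with $x_1,\ldots,x_h,y_1,\ldots,y_h\in\cF$ (repetitions allowed). The goal is to prove that the multisets $\{x_1,\ldots,x_h\}$ and $\{y_1,\ldots,y_h\}$ coincide, which is exactly the $B_h$ (i.e. $g=1$) condition.

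**The key observation** is that equality in $\cF^h$ means the coordinate-wise sums agree, i.e. the power sums match: $p_k(x_1,\ldots,x_h)=p_k(y_1,\ldots,y_h)$ for every $k=1,\ldots,h$. The engine of the proof is Newton's identities \eqref{EqNewton}. I would argue by induction on $k$ that $\sigma_k(x_1,\ldots,x_h)=\sigma_k(y_1,\ldots,y_h)$ for all $k=0,\ldots,h$. The base case $\sigma_0=1$ is immediate. For the inductive step, \eqref{EqNewton} expresses $k\sigma_k$ as a polynomial in $\sigma_0,\ldots,\sigma_{k-1}$ and $p_1,\ldots,p_k$; since the power sums agree and the lower elementary symmetric polynomials agree by the inductive hypothesis, the right-hand sides for the $x$'s and the $y$'s are identical, so $k\sigma_k(x)=k\sigma_k(y)$.

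**Here is where the characteristic hypothesis does its work**, and it is the step I expect to require the most care: from $k\sigma_k(x)=k\sigma_k(y)$ I want to cancel the factor $k$ and conclude $\sigma_k(x)=\sigma_k(y)$. This cancellation is legitimate precisely because $1\le k\le h$ and $\cF$ has characteristic $0$ or $p>h$, so $k$ is a nonzero (hence invertible) element of $\cF$. This is exactly why the theorem fails without the hypothesis on the characteristic, and I would flag it as the crux. Completing the induction gives $\sigma_k(x_1,\ldots,x_h)=\sigma_k(y_1,\ldots,y_h)$ for all $k=0,\ldots,h$.

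**Finally**, I would convert equal elementary symmetric polynomials into equal multisets via the factorization identity displayed just before the theorem: the monic polynomials $\prod_{j=1}^h(\lambda-x_j)$ and $\prod_{j=1}^h(\lambda-y_j)$ have coefficients $(-1)^k\sigma_k$, which now agree termwise, so the two polynomials are identical in $\cF[\lambda]$. Since a polynomial over a field (more precisely, over an integral domain, and a field qualifies) determines its multiset of roots with multiplicity, the factorizations coincide and therefore $\{x_1,\ldots,x_h\}=\{y_1,\ldots,y_h\}$ as multisets. This establishes that every element of $\cF^h$ has at most one representation as a sum of $h$ elements of $\mA$ up to reordering, so $\mA$ is a $B_h$ set, as claimed.
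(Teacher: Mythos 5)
Your proposal is correct and follows essentially the same route as the paper's proof: matching power sums, Newton's identities to recover the elementary symmetric polynomials, and unique factorization in $\cF[\lambda]$ to equate the multisets. If anything, you are more explicit than the paper about the crucial point that the characteristic hypothesis is what lets you divide by $k$ in Newton's identities, which the paper leaves implicit in the word ``recursively.''
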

\begin{proof}
    Let $s\in \mathbb{F}^h$. Suppose there exist two different representations of $s$ as sum of $h$ elements of $\mA$ as follows
    \[
        s=(a_1,\ldots,a_1^h)+\cdots+(a_h,\ldots,a_h^h)=(b_1,\ldots,b_1^h)+\cdots+(b_h,\ldots,b_h^h),
    \]
     $a_i,b_i\in\mathbb{F}$ for $i=1,\ldots,h$. Note that for all $k=1,\ldots,h$, $\sum_{i=1}^h a_i^k=\sum_{i=1}^h b_i^k$. Because $p_k(a_1,\ldots,a_h)=\sum_{i=1}^h a_i^k$ and $p_k(b_1,\ldots,b_h)=\sum_{i=1}^h b_i^k$, using \eqref{EqNewton} recursively we have $\sigma_i(a_1,\ldots,a_h)=\sigma_i(b_1,\ldots,b_n)$, for all $i=1,\ldots,h$, that is
    %\footnote{Note that if characteristic of $\cF$ is equal to $h$, and $(a_i,\ldots,a_i^h)=(a_j,\ldots,a_j^h)$ and $(b_i,\ldots,b_i^h)=(b_j,\ldots,b_j^h)$ for all $i,j=1,\ldots,h$, we would have two different representations of the zero element of $\cF$, which implies that $\mA$ cannot be a $B_h$ set.}.
    %\[
    %    \sigma_i(a_1,\ldots,a_h)=\sigma_i(b_1,\ldots,b_n),
    %\]
    %that is
    \begin{align*}
        a_1+\cdots+a_h&=b_1+\cdots+b_h,\\
        a_1a_2+\cdots+a_{h-1}a_h&=b_1b_2+\cdots+b_{h-1}b_h,\\
        &\cdots\\
        a_1\ldots a_h&=b_1\ldots b_h,
    \end{align*}
    which implies that the elements of the sets $\{a_1,\ldots,a_h\}$ and $\{b_1,\ldots,b_h\}$ are roots of the same polynomial $q(x)$ on $\mathbb{F}[x]$, i.e.,
    \[
        q(x)=(x-a_1)\cdots (x-a_h)=(x-b_1)\cdots (x-b_h).
    \]
     That is, $\{a_1,\ldots,a_h\}=\{b_1,\ldots,b_h\}$ ($\mathbb{F}[x]$ is a unique factorization domain). Thus, cannot be possible to have two different representations of $s\in\mathbb{F}$ as sum of $h$ elements of $\mathbb{F}^h$ and $\mA$ is a $B_h$ set on $\left(\cF^h,+\right)$.
\end{proof}
Consider the case when $\cF$ is the finite field $\cF_q$, with $q=p^n$ for some $n\in\mathbb{N}$ and $p$ prime. Note that the groups $\left(\mathbb{F}_{p^n},+\right)$ and $\left(\mathbb{F}_{p}^n,+\right)$ are isomorphic, because if $\theta$ is a root of an irreducible polynomial of degree $n$ over $\cF_{p}$ in an extension field, the function
\begin{equation}
    \begin{array}[c]{cccc}
        \phi: & \mathbb{F}_{p^{n}} & \rightarrow & \mathbb{F}_{p}^{n}\label{IsomoFandF}\\
        & a_{0}+\cdots+a_{n-1}\theta^{n-1} & \mapsto &(a_{0},\ldots,a_{n-1})
    \end{array}
\end{equation}
defines an isomorphism between them.
\begin{corollary}\label{Cor_xh}
    For all $p>h$ prime and for all $n\in\mathbb{N}$ there exists a $B_h$ set with $p^n$ elements on $\left(\mathbb{Z}_p^{hn},+\right)$.
\end{corollary}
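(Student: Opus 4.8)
The plan is to apply Theorem~\ref{TheCons_xh} to the finite field $\cF_{p^n}$ and then transport the resulting set across the isomorphism $\phi$ of~\eqref{IsomoFandF}. First I would note that since $p>h$, the field $\cF_{p^n}$ has characteristic $p>h$, so the hypothesis of Theorem~\ref{TheCons_xh} is met with $\cF=\cF_{p^n}$, and hence $\mA=\{(x,x^2,\ldots,x^h):x\in\cF_{p^n}\}$ is a $B_h$ set on $\left(\cF_{p^n}^h,+\right)$. Because the first coordinate already separates distinct values of $x$, the map $x\mapsto(x,x^2,\ldots,x^h)$ is injective, so $|\mA|=|\cF_{p^n}|=p^n$.

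Next I would promote $\phi$ to an isomorphism of the $h$-fold products. Applying $\phi$ in each coordinate gives the map $\Phi:\cF_{p^n}^h\to\left(\cF_p^n\right)^h$ defined by $\Phi(w_1,\ldots,w_h)=(\phi(w_1),\ldots,\phi(w_h))$, which is a group isomorphism because $\phi$ is. Identifying $\left(\cF_p^n\right)^h$ with $\cF_p^{hn}$ and recalling that $\left(\cF_p,+\right)=\left(\cZ_p,+\right)$, this furnishes a group isomorphism $\Phi:\cF_{p^n}^h\to\cZ_p^{hn}$.

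The key step is to verify that the $B_h$ property is preserved under such an isomorphism, so that $\Phi(\mA)$ is again a $B_h$ set; this is where I would be most careful, though it is essentially formal. If some $s\in\cZ_p^{hn}$ admitted two distinct representations as a sum of $h$ elements of $\Phi(\mA)$, say $s=\Phi(u_1)+\cdots+\Phi(u_h)=\Phi(v_1)+\cdots+\Phi(v_h)$ with $u_i,v_i\in\mA$, then applying $\Phi^{-1}$ and using its additivity would yield two distinct representations of $\Phi^{-1}(s)$ as a sum of $h$ elements of $\mA$, contradicting Theorem~\ref{TheCons_xh}. Since $\Phi$ is a bijection, distinct multisets of summands correspond to distinct multisets under $\Phi$, so the counting condition transfers exactly; hence $\Phi(\mA)$ is a $B_h$ set on $\cZ_p^{hn}$.

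Finally, as $\Phi$ is a bijection we have $|\Phi(\mA)|=|\mA|=p^n$, producing a $B_h$ set with $p^n$ elements on $\left(\cZ_p^{hn},+\right)$, as claimed. The main obstacle is not any single hard computation but rather being precise about transporting the ``at most one representation'' condition through $\Phi$ and confirming that representations correspond bijectively under the isomorphism.
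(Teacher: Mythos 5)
Your proposal is correct and follows the same route as the paper, which simply invokes Theorem~\ref{TheCons_xh} together with the isomorphism $\phi$ of~\eqref{IsomoFandF}; you merely spell out the coordinatewise extension of $\phi$ and the (routine) transfer of the $B_h$ property across an isomorphism, details the paper leaves implicit.
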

\begin{proof}
It follows immediately from Theorem~\ref{TheCons_xh} and the isomorphism $\phi$ given in \eqref{IsomoFandF}.
\end{proof}
We illustrate these results in the following example.
\begin{example}
    Consider $h=n=2$ and $p=3$. Let $p(x)=x^2+1$ be an irreducible polynomial on $\mathbb{Z}_3$. Suppose that $\theta$ is a root of $p(x)$ in an extension field of $\mathbb{Z}_3$. The field with 9 elements is given by
    \begin{align*}
        \mathbb{F}_9& =\{a+b\theta:a,b\in\mathbb{Z}_3\}\\
        & =\{0,1,2,\theta,\theta+1,\theta+2,2\theta,2\theta+1,2\theta+2\}.
    \end{align*}
    Using Theorem~\ref{TheCons_xh} we know that
    \[
        \mA=\left\{
            \begin{array}[l]{l}
                (0,0),(1,1),(2,1),(\theta,2),(\theta+1,2\theta),(\theta+2,\theta),\\
                (2\theta,2),(2\theta+1,\theta),(2\theta+2,2\theta)
            \end{array}
            \right\}
    \]
    is a Sidon set on $\left(\mathbb{F}_9\times \mathbb{F}_9,+\right)=\left(\mathbb{F}_9^2,+\right)$. Furthermore, using Corollary~\ref{Cor_xh} we have
    \[
        \mB=\left\{
            \begin{array}[l]{l}
                (0,0,0,0),(0,1,0,1),(0,2,0,1),(1,0,0,2),(1,1,2,0),\\
                (1,2,1,0),(2,0,0,2),(2,1,1,0),(2,2,2,0)
            \end{array}
            \right\}
    \]
    is a Sidon set on $\left(\mathbb{Z}_3\times\mathbb{Z}_3\times\mathbb{Z}_3\times\mathbb{Z}_3,+\right)=\left(\mathbb{Z}_3^4,+\right)$.
\end{example}

\section{Construction of $B_h[g]$ sets on $\left(\cZ^d,+\right)$}
\label{SectionConstruction}
In this section we present a construction of $B_h[g]$ sets for all $h,g\geq2$ on $\left(\mathbb{Z}^d,+\right)$. This construction generalizes a construction introduced by Trujillo in \cite{TrujilloDoc} which allows to obtain Sidon sets on $\left(\mathbb{Z}\times\mathbb{Z},+\right)$ from a Sidon set on $\left(\mathbb{Z},+\right)$. Our generalization also allows to construct $d-$dimensional $B_h[g]$ sets for all $h,g\geq2$ and any dimension $d$, from which it is possible to determine a way to map $B_h[g]$ sets on $(\cZ,+)$ into $B_h[g]$ sets on $\left(\cZ^d,+\right)$. %Furthermore, this construction implies that good $1-$dimensional constructions of $B_h[g]$ sets lead to lower bounds for the maximal cardinality of a $d-$dimensional $B_h[g]$ set.%
%\subsection{Construction depending from $1-$dimensional $B_h[g]$ set}

Let $d,N$ be positive integers greater than $1$. Let $\mA$ denote a subset of $\mathbb{Z}^+$. If $a\in\mA$, $[a]_N=(n_k,\ldots,n_1,n_0)_N$ represents the integer $a=n_kN^k+\cdots+n_1N+n_0$ in base $N$ notation, where $k$ is a nonnegative integer and $0\leq n_j\leq N-1$, for $j=0,1,\ldots,k$. We denote the set obtained from the representation of each element of $\mA$ in base $N$ as $[\mA]_N$. Because every positive integer can be written uniquely in base $N$, then
\begin{equation*}
    |\mA|=|[\mA]_N|.
    \label{CardEqu}
\end{equation*}
Note that if $\mA\subseteq\left[0,N^{d}-1\right]$, then % we have\footnote{Consider the largest element $a$ in the interval $[0,N^{d}-1]$, i.e., $a=N^{d}-1$, then
%\[
%    a=(N-1)N^{d-1}+\cdots+(N-1)N+(N-1)
%\]
%which implies that $[a]_N$ is represented as the $d-$dimensional vector $(N-1)(1,\ldots,1)_N$. Therefore $[A]_N\subseteq[0,N-1]^d$.}
$[\mA]_{N}\subseteq[0,N-1]^{d}$.
%The following construction allows us to obtain a $d-$dimensional $B_h[g]$ set for $h,d\geq2$ from a given $B_h[g]$ set on $\left(\mathbb{Z},+\right)$.

\begin{theorem}\label{Trujilloconst}
If $\mA$ is a $B_h[g]$ set contained in $[0,N^d-1]$, then $[\mA]_N$ is a $B_h[g]$ set contained in $[0,N-1]^d$.
\end{theorem}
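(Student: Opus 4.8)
The plan is to show that representing integers in base $N$ sets up a bijection between the $h$-fold sumsets in $\mathbb{Z}$ and in $\mathbb{Z}^d$, so that the $B_h[g]$ property is transported exactly. The cardinality claim $|[\mA]_N|=|\mA|$ is already furnished by \eqref{CardEqu}, since base-$N$ representation is injective on positive integers; likewise the containment $[\mA]_N\subseteq[0,N-1]^d$ is the remark immediately preceding the statement. So the whole content is the transfer of the counting property.

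First I would set up the digit map carefully. For each $a\in\mA\subseteq[0,N^d-1]$, write $a=n_{d-1}N^{d-1}+\cdots+n_1N+n_0$ with digits $n_j\in[0,N-1]$, and let $\psi(a)=(n_0,n_1,\ldots,n_{d-1})\in[0,N-1]^d$ be the associated coordinate vector, so that $[\mA]_N=\psi(\mA)$. The key observation is that $\psi$ is the restriction to $[0,N^d-1]$ of the group isomorphism $\Phi:\mathbb{Z}\to\mathbb{Z}^d$ that is \emph{additive}: if I define $\Phi(m)$ by taking the unique $N$-ary digits of $m$ (allowing all integer coordinates, not just those in $[0,N-1]$), then $\Phi$ is a $\mathbb{Z}$-module isomorphism whose inverse is $(c_0,\ldots,c_{d-1})\mapsto\sum_{j}c_jN^{j}$. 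The crucial point to verify is the \textbf{no-carry} property: when I add $h$ elements $a_1,\ldots,a_h$ of $\mA$, each digit sum $\sum_{i=1}^h (a_i)_j$ lies in $[0,h(N-1)]$, and I must ensure that the coordinatewise sum of the vectors $\psi(a_i)$ equals $\Phi$ applied to the integer sum $\sum_i a_i$. This is where additivity of $\Phi$ does the work: $\Phi\big(\sum_i a_i\big)=\sum_i\Phi(a_i)=\sum_i\psi(a_i)$ holds as an identity in $\mathbb{Z}^d$ regardless of carrying, precisely because I am comparing vectors over $\mathbb{Z}$ (with unrestricted coordinates) rather than re-reducing digits into $[0,N-1]$.

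With this in hand the counting transfers cleanly. Fix a target $s\in\mathbb{Z}^d$ and consider representations $s=\psi(a_1)+\cdots+\psi(a_h)$ with $a_i\in\mA$. Applying $\Phi^{-1}$ and using additivity, each such representation corresponds bijectively to a representation $\Phi^{-1}(s)=a_1+\cdots+a_h$ of the integer $t:=\Phi^{-1}(s)$ as a sum of $h$ elements of $\mA$; conversely every representation of $t$ in $\mathbb{Z}$ pushes forward under $\Phi$ to a representation of $s$ in $\mathbb{Z}^d$. Because $\Phi$ and $\Phi^{-1}$ are bijections, this correspondence respects the identification of representations up to reordering of the summands. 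Hence the number of representations of $s$ over $[\mA]_N$ equals the number of representations of $t$ over $\mA$, which is at most $g$ by hypothesis (and is $0$ when $t$ is not realizable). Since $s$ was arbitrary, $[\mA]_N$ is a $B_h[g]$ set, completing the proof.

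The step I expect to be the main obstacle is making the no-carry/additivity argument airtight: one must resist the temptation to think of $\psi$ only as a map into the box $[0,N-1]^d$ and instead recognize $\Phi$ as a genuine group isomorphism $\mathbb{Z}\cong\mathbb{Z}^d$ so that the relation $s=\sum_i\psi(a_i)$ is read in $\mathbb{Z}^d$ without digit reduction. The subtlety is that the individual vectors $\psi(a_i)$ have coordinates in $[0,N-1]$, but their sum $s$ need \emph{not}; identifying $s$ with $\Phi$ of the integer sum is exactly the content that carrying does not obstruct the bijection, and it is what guarantees that distinct $\mathbb{Z}^d$-representations pull back to distinct $\mathbb{Z}$-representations and vice versa.
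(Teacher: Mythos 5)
Your overall strategy --- transfer representations through the base-$N$ evaluation map and use uniqueness of digits --- is the same as the paper's, and the half of your argument that the theorem actually needs is sound. But the object you build the argument on does not exist: there is no group isomorphism $\Phi:\mathbb{Z}\to\mathbb{Z}^d$ (for $d\geq2$ these are free abelian groups of different ranks), and ``the unique $N$-ary digits of $m$ allowing all integer coordinates'' is not well defined, precisely because the evaluation map $\pi:(c_0,\dots,c_{d-1})\mapsto\sum_j c_jN^j$ is a surjective but non-injective homomorphism $\mathbb{Z}^d\to\mathbb{Z}$ (it identifies $(N,0,\dots,0)$ with $(0,1,0,\dots,0)$). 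What is true is only that $\pi$ restricted to the box $[0,N-1]^d$ is a bijection onto $[0,N^d-1]$, with inverse $\psi$. Consequently your claimed bijection between representations of $s$ and representations of $t$ fails in one direction: a representation $t=b_1+\cdots+b_h$ with $b_i\in\mA$ pushes forward to a representation of $\psi(b_1)+\cdots+\psi(b_h)$, which is some preimage of $t$ under $\pi$ but need not equal $s$, because carries intervene. For instance with $N=10$, $d=h=2$: $19+1=10+10=20$ in $\mathbb{Z}$, yet $(9,1)+(1,0)=(10,1)\neq(0,2)=(0,1)+(0,1)$ in $\mathbb{Z}^2$. This asymmetry is also why the theorem is genuinely one-directional, i.e.\ why one gets $F_h(N^d,g)\leq F_h^d(N,g)$ rather than equality.

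The repair is small and lands you exactly on the paper's proof: you only need the forward direction. If $s=\psi(a_1)+\cdots+\psi(a_h)$ in $\mathbb{Z}^d$, applying the homomorphism $\pi$ coordinate by coordinate gives $\pi(s)=a_1+\cdots+a_h$, so every representation of $s$ over $[\mA]_N$ yields a representation of the single integer $\pi(s)$ over $\mA$; and since $\psi$ is injective on $[0,N^d-1]$, distinct multisets $\{\psi(a_1),\dots,\psi(a_h)\}$ arise from distinct multisets $\{a_1,\dots,a_h\}$. Hence the number of representations of $s$ is \emph{at most} (not equal to) the number of representations of $\pi(s)$, which is at most $g$ by hypothesis. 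State that step as an inequality and drop the isomorphism language, and the argument is correct and coincides with the paper's.
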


\begin{proof}
Let $s$ be a $d-$tuple in $\mathbb{Z}^d$ obtained as sum of $h$ elements in $[\mA]_N$. Suppose there exist $g+1$ representations of $s$ as follows
\begin{equation}
    s=[a_{1,1}]_N+\cdots+[a_{1,h}]_N=\cdots=[a_{g+1,1}]_N+\cdots+[a_{g+1,h}]_N,
    \label{hsumandos}
\end{equation}
where $a_{i,j}\in\mA$ for all $1\leq i\leq g+1$, $1\leq j\leq h$. Consider the representation of each $a_{i,j}\in\mA$ in base $N$ as    $[a_{i,j}]_N=\left(n_{(d-1,i,j)},\ldots,n_{(0,i,j)}\right)$. Note that for any $1\leq i\leq g+1$
\begin{align*}
    [a_{i,1}]_N+\cdots+[a_{i,h}]_N =&\left(n_{(d-1,i,1)},\ldots,n_{(0,i,1)}\right)+\cdots+\left(n_{(d-1,i,h)},\ldots,n_{(0,i,h)}\right)\\
    =&\left(n_{(d-1,i,1)}+\cdots+n_{(d-1,i,h)},\ldots,n_{(0,i,1)}+\cdots+n_{(0,i,h)}\right).
\end{align*}
Furthermore
\[
    \left(n_{(d-1,i,1)}+\cdots+n_{(d-1,i,h)}\right)N^{d-1}+\cdots+\left(n_{(0,i,1)}+\cdots+n_{(0,i,h)}\right)=a_{i,1}+\cdots+a_{i,h}
\]
which implies from \eqref{hsumandos} that
\begin{equation}
    a_{1,1}+\cdots+a_{1,h}=\cdots=a_{g+1,1}+\cdots+a_{g+1,h}.
    \label{hsumasenA}
\end{equation}
Because $\mA$\ is a $B_h[g]$ set, using \eqref{hsumasenA} we know there exist $\ell,m$ with $\ell\neq m$ and $1\leq \ell,m\leq g+1$, such that
\[
    \{a_{\ell,1},\ldots,a_{\ell,h}\}=\{a_{m,1},\ldots,a_{m,h}\}.
\]
Since representation in base $N$ notation is unique we have
\[
    \{[a_{\ell,1}]_{N},\ldots,[a_{\ell,h}]_{N}\}=\{[a_{m,1}]_{N},\ldots,[a_{m,h}]_{N}\},
\]
That is, it is not possible to have $g+1$ representations of $s$ as sum of $h$ elements of $\mA$. Therefore $[\mA]_{N}$ is a $B_h[g]$ set contained in $[0,N-1]^d\subset\left(\cZ^d,+\right)$.
\end{proof}
%The following corollary allows us to establish lower bounds for the function $F_{h}^{d}(N,g)$ from known lower bounds for $F_{h}^{1}(N^d,g)$.
\begin{example}\label{Ejemplo1}
Note that $\mA=\{1,2,7\}$ is a Sidon set on $\left(\mathbb{Z}_8,+\right)$. In \cite{TrujilloDoc} Trujillo constructs a $B_2[2]$ set on $(\cZ,+)$ as follows
\[
    \mathcal{B}:=\mA\cup\left(\mA+m\right)\cup\left(\mA+3m\right)=\{1,2,7,9,10,15,25,26,31\},
\]
with $m=8$. Because $\mB\subseteq[0,2^5-1]$,
%Note in Table \ref{Trujillo1dim} that the bold face elements appear exactly two times as sum of two elements of $\mathcal{B}$.
%\begin{table}[t]
%    \[
%    \begin{tabular}[c]{ccccccccc}
%        1 & 2 & 7 & 9 & 10 & 15 & 25 & 26 & 31\\\hline
%        2 & 3 & 8 & 10 & \textbf{11} & \textbf{16} & 26 & \textbf{27} & \textbf{32}\\
%        & 4 & 9 & \textbf{11} & 12 & \textbf{17} & \textbf{27} & 28 & \textbf{33}\\
%        &  & 14 & \textbf{16} & \textbf{17} & 22 & \textbf{32} & \textbf{33} & 38\\
%        &  &  &  18 & 19 & 24 & 34 & \textbf{35} & \textbf{40}\\
%        &  &  &  & 20 & 25 & \textbf{35} & 36 & \textbf{41}\\
%        &  &  &  &  & 30 & \textbf{40} & \textbf{41} & 46\\
%        &  &  &  &  &  & 50 & 51 & 56\\
%        &  &  &  &  &  &  & 42 & 57\\
%        &  &  &  &  &  &  &  & 62
%    \end{tabular}
%    \]
%    \caption{Sums of $\mathcal{B}=\{1,2,7,9,10,15,25,26,31\}$ in Example~\ref{Ejemplo1}. The bold face elements can be represented exactly two times as sum of two elements of $\mathcal{B}$.}
%    \label{Trujillo1dim}
%\end{table}
using Theorem \ref{Trujilloconst} we have that
\[
    [\mathcal{B}]_{2}=
    \left\{
        \begin{array}[c]{l}
            (0,0,0,0,1),(0,0,0,1,0),(0,0,1,1,1),(0,1,0,0,1),(0,1,0,1,0),\\
            (0,1,1,1,1),(1,1,0,0,1),(1,1,0,1,0),(1,1,1,1,1)
        \end{array}
    \right\}
\]
is a $B_{2}[2]$ set contained in $[0,1]^5$. %Note in $\mathcal{B}$, for instance, two representations of 11 as $11=10+1=9+2$ what implies in $[\mathcal{B}]_2$ that
%\[
%    [11]_2=[10]_2+[1]_2=[9]_2+[2]_2
%\]
%i.e.,
%\[
%    (0,1,0,1,1) = (0,1,0,1,0)_2+(0,0,0,0,1)_2 = (0,1,0,0,1)_2+(0,0,0,1,0)_2
%\]
Note also that $\mathcal{B}\subseteq[0,6^{2}-1]$, so
\[
    [\mathcal{B}]_6=\{(0,1),(0,2),(1,1),(1,3),(1,4),(2,3),(4,1),(4,2),(5,1)\}
\]
is a $B_{2}[2]$ set contained in $[0,5]^{2}$. %In Table~\ref{Trujillo2dim} we see there exist at most two representations for any tuple in $\left(\mathbb{Z}^2,+\right)$ represented as the sum of two elements of $[\mathcal{B}]_6$.
%\begin{table}[h!]
%\[
%    \begin{tabular}
%    [c]{ccccccccc}
%    $(0,1)$ & $(0,2)$ & $(1,1)$ & $(1,3)$ & $(1,4)$ & $(2,3)$ & $(4,1)$ & $(4,2)$ & $(5,1)$\\\hline
%    $(0,2)$ & $(0,3)$ & $(1,2)$ & $(1,4)$ & $(\mathbf{1,5})$ & $(\mathbf{2,4})$ & $(4,2)$ & $(\mathbf{4,3})$ & $(\mathbf{5,2})$\\
%    & $(0,4)$ & $(0,3)$ & $(\mathbf{1,5})$ & $(1,6)$ & $(\mathbf{2,5})$ & $(\mathbf{4,3})$ & $(4,4)$ & $(\mathbf{5,3})$\\
%    &  & $(2,2)$ & $(\mathbf{2,4})$ & $(\mathbf{2,5})$ & $(3,4)$ & $(\mathbf{5,2})$ & $(\mathbf{5,3})$ & $(6,2)$\\
%    &  &  &  $(2,6)$ & $(2,7)$ & $(3,6)$ & $(5,4)$ & $(\mathbf{5,5})$ & $(\mathbf{6,4})$\\
%    &  &  &  & $(2,8)$ & $(3,7)$ & $(\mathbf{5,5})$ & $(5,6)$ & $(\mathbf{6,5})$\\
%    &  &  &  &  & $(4,6)$ & $(\mathbf{6,4})$ & $(\mathbf{6,5})$ & $(7,4)$\\
%    &  &  &  &  &  & $(8,2)$ & $(8,3)$ & $(9,2)$\\
%    &  &  &  &  &  &  & $(8,4)$ & $(9,3)$\\
%    &  &  &  &  &  &  &  & $(10,2)$
%    \end{tabular}
%\]
%\caption{Sums of $[\mathcal{B}]_6=\{(0,1),(0,2),(1,1),(1,3),(1,4),(2,3),(4,0),(4,2),(5,1)\}$. The bold face elements can be represented exactly two times as sum of two elements of $[\mathcal{B}]_6$.}
%\label{Trujillo2dim}
%\end{table}
\end{example} 
\section{Construction of $B_h[g]$ sets on $\left(\cZ_{m_1}\times\cdots\times\cZ_{m_d},+\right)$}
    \label{ConsNuestra}
%\subsection{Construction depending from $d-$dimensional Sidon sets on $\langle\mathbb{Z}_{m_1}\times\cdots\times\mathbb{Z}_{m_d},+\rangle$}
\label{SecModular}
This section extend a construction of $B_h[g]$ sets given in \cite{TrujilloIndia} for $h=2$ and $d=1$, to all $h\geq2$ and any dimension $d>1$. First, we introduce the following result.
\begin{lemma}\label{Jhonny}
    Let $G$ and $G^\prime$ be two abelian groups and let $\phi:G\rightarrow G^\prime$ define a homomorphism. If $\mA$ is a $B_h[g]$ set on $G$ and $|Ker(\phi)|=g^\prime$, then $\phi(\mA)$ is a $B_h[gg^\prime]$ set on $\phi(G)$, where $gg^\prime$ denotes the product between $g$ and $g^\prime$.
\end{lemma}
The proof is given in \cite{EstTrujillo}.

Now, let $m_1,\ldots,m_d$ and $g_1,\ldots,g_d$ be positive integers. Using Lemma~\ref{Jhonny} we have the following result.
%The following construction proves a conjecture presented in \cite{RuizGomezLopez}
\begin{theorem}\label{ConsModular}
    Let $\mathcal{A}$ be a $B_h[g]$ set on $\left(\mathbb{Z}_{m_1}\times\cdots\times\mathbb{Z}_{m_d},+\right)$. If $g_1,\ldots,g_d$ are divisors of $m_1,\ldots,m_d$, respectively, then
    \[
        \mB:=\left\{\left(a_1\bmod \frac{m_1}{g_1},\ldots,a_d\bmod \frac{m_d}{g_d}\right):(a_1,\ldots,a_d)\in\mathcal{A}\right\}
    \]
    is a $B_h[gg_1\cdots g_d]$ set on $\left(\mathbb{Z}_{\frac{m_1}{g_1}}\times\cdots\times\mathbb{Z}_{\frac{m_d}{g_d}},+\right)$.
\end{theorem}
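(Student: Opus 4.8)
The plan is to exhibit $\mA$ as the image of $\mathcal{B}$ under a single group homomorphism and then invoke Lemma~\ref{Jhonny}. Concretely, I would define
\[
    \phi:\cZ_{m_1}\times\cdots\times\cZ_{m_d}\rightarrow \cZ_{\frac{m_1}{g_1}}\times\cdots\times\cZ_{\frac{m_d}{g_d}},\qquad \phi(b_1,\ldots,b_d)=\left(b_1\bmod\tfrac{m_1}{g_1},\ldots,b_d\bmod\tfrac{m_d}{g_d}\right),
\]
so that by construction $\phi(\mathcal{B})=\mA$. The first step is to check that $\phi$ is a well-defined homomorphism: since $g_i\mid m_i$, the quotient $\frac{m_i}{g_i}$ divides $m_i$, hence each coordinate map $\cZ_{m_i}\to\cZ_{m_i/g_i}$ sending $b_i$ to $b_i\bmod\frac{m_i}{g_i}$ is a well-defined surjective homomorphism. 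The product map $\phi$ is therefore a surjective homomorphism, so $\phi(G)$ equals the full target group.

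Second, I would compute $|\mathrm{Ker}(\phi)|$. An element $(b_1,\ldots,b_d)$ lies in the kernel exactly when each $b_i\equiv 0\pmod{\frac{m_i}{g_i}}$, i.e. $b_i\in\left\{0,\frac{m_i}{g_i},2\frac{m_i}{g_i},\ldots,(g_i-1)\frac{m_i}{g_i}\right\}$, which gives exactly $g_i$ choices in each coordinate. Hence $|\mathrm{Ker}(\phi)|=g_1\cdots g_d$.

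Finally, since $\mathcal{B}$ is a $B_h[g]$ set on $G$ and $\phi$ is a homomorphism with $|\mathrm{Ker}(\phi)|=g_1\cdots g_d$, Lemma~\ref{Jhonny} (applied with $g^\prime=g_1\cdots g_d$) immediately yields that $\phi(\mathcal{B})=\mA$ is a $B_h[g\,g_1\cdots g_d]$ set on $\phi(G)=\cZ_{m_1/g_1}\times\cdots\times\cZ_{m_d/g_d}$, which is precisely the claim.

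I do not anticipate a serious obstacle: essentially all the combinatorial content is packaged into Lemma~\ref{Jhonny}, and what remains is the kernel count, which is the main (though routine) step. The one point that requires a little care is confirming that the kernel factors as a product across the coordinates, so that its cardinality is the full product $g_1\cdots g_d$ rather than something smaller; this follows because $\phi$ is a direct product of the individual coordinate reductions and the kernel of a product map is the product of the kernels.
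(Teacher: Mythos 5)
Your proposal is correct and follows essentially the same route as the paper: both define the coordinate-wise reduction homomorphism $\phi$, compute $|\mathrm{Ker}(\phi)|=g_1\cdots g_d$ by counting the $g_i$ multiples of $\frac{m_i}{g_i}$ in each coordinate, and conclude via Lemma~\ref{Jhonny}. If anything, your version is slightly cleaner, since you define $\phi$ on all of $G$ and note surjectivity, whereas the paper defines it only on $\mathcal{B}$.
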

\begin{proof}
    Using notation used in Lemma~\ref{Jhonny}, let $G=(\mathbb{Z}_{m_1}\times\cdots\times\mathbb{Z}_{m_d},+)$ and $G^\prime=~\left(\mathbb{Z}_{\frac{m_1}{g_1}}\times\cdots\times\mathbb{Z}_{\frac{m_d}{g_d}},+\right)$ and define the homomorphism $\phi:G\rightarrow G^\prime$ as $\phi(b_1,\ldots,b_d)=\left(b_1\bmod \frac{m_1}{g_1},\ldots,b_d\bmod \frac{m_d}{g_d}\right)$. We establish $Ker(\phi)$ as follows. Note that $(b_1,\ldots,b_n)\in Ker(\phi)$ if and only if $\phi(b_1,\ldots,b_n)=(0,\ldots,0)$, that is, if
    \[
        \left(b_1\bmod \frac{m_1}{g_1},\ldots,b_d\bmod \frac{m_d}{g_d}\right)=(0,\ldots,0).
    \]
     Note also that $b_i\bmod \frac{m_i}{g_i}=0$ if and only if $b_i=k_i\frac{m_i}{g_i}$, for $k_i\in [1,g_i]$ and for all $i=1,\ldots,d$, which implies that $b_i\bmod \frac{m_i}{g_i}=0$ in exactly $g_i$ values. Thus, $|Ker(\phi)|=\prod_{i=1}^d g_i$. Finally, using Lemma~\ref{Jhonny} we have that $\mB=\phi(\mA)$ is a $B_h[gg_1\cdots g_d]$ set on $\left(\mathbb{Z}_{\frac{m_1}{g_1}}\times\cdots\times\mathbb{Z}_{\frac{m_d}{g_d}},+\right)$.
\end{proof}
Given $q$ a prime power and $\mathbb{F}$ a field, to illustrate Theorem~\ref{ConsModular} we present a construction of Sidon sets on $\left(\mathbb{Z}_{q-1}\times\mathbb{Z}_{q-1},+\right)$, which is based on the discrete logarithm\footnote{If $\theta$ is a primitive of $\mathbb{F}_q$, $\log_\theta (x)$ denotes the unique integer $k\in[1,q-1]$ such that $\theta^k=x$ on $\mathbb{F}_q$.} on $\mathbb{F}_q$.
\begin{proposition}\label{PropGolomb}
    Let $q=p^n$ a prime power. If $\alpha,\beta$ are primitive elements of $\mathbb{F}_q^*$ and $a\in\mathbb{F}_q^*$, then
    \begin{equation}
        \mathcal{G}(\alpha,\beta,a):=\{(i,\log_\beta(a-\alpha^i)):i=1,\ldots,q-1,\,\alpha^i\neq a\}
        \label{EqPropGolomb}
    \end{equation}
    is a Sidon set on $\left(\mathbb{Z}_{q-1}\times\mathbb{Z}_{q-1},+\right)$.% Furthermore, $|\mathcal{G}(\alpha,\beta,a)|=q-2$.
\end{proposition}
\begin{proof}
    Suppose there exist $u,v,w,y\in\mathcal{G}(\alpha,\beta,a)$ such that $u+v=w+y.$ Using~\eqref{EqPropGolomb} we know that there exist $i,j,k,\ell\in[1,q-1]$ such that
    \begin{equation}
        (i,\log_\beta(a-\alpha^i))+(j,\log_{\beta}(a-\alpha^{j})) =(k,\log_\beta(a-\alpha^{k}))+(\ell,\log_{\beta}(a-\alpha^{\ell}))
        \label{Eq1Golomb}
    \end{equation}
    where $\alpha^i,\alpha^j,\alpha^k,\alpha^\ell$ are not equal to $a$. From \eqref{Eq1Golomb} we have
    \begin{align*}
        (i+j)& \equiv(k+\ell)\bmod(q-1),\\
        \log_\beta(a-\alpha^i)+\log_\beta(a-\alpha^j)& \equiv(\log_\beta(a-\alpha^k)+\log_\beta(a-\alpha^\ell))\bmod(q-1),
    \end{align*}
    what implies that $(a-\alpha^i)(a-\alpha^j)=(a-\alpha^k)(a-\alpha^\ell)$.
    %\[
    %    (a-\alpha^i)(a-\alpha^j)=(a-\alpha^k)(a-\alpha^\ell)
    %\]
    We have in $\mathbb{F}_q^*$
    \begin{align*}
        \alpha^i\alpha^j & =\alpha^k\alpha^\ell,\\
        \alpha^i+\alpha^j & =\alpha^k+\alpha^\ell,
    \end{align*}
    that is, $\alpha^i,\alpha^j$, and $\alpha^k,\alpha^\ell$ are roots of a polynomial $q(x)\in\mathbb{F}[x]$ of degree 2 (i.e., $q(x)=(x+\alpha^i)(x+\alpha^j)=(x+\alpha^k)(x+\alpha^\ell)$). Therefore, $\{\alpha^i,\alpha^j\}=\{\alpha^k,\alpha^\ell\}$ and $\{i,j\}=\{k,\ell\}$, which implies that is not possible to have two representations of an element in $\mathbb{Z}_{q-1}\times\mathbb{Z}_{q-1}$ as sum of two elements of $\mathcal{G}(\alpha,\beta,a)$. That is, $\mathcal{G}(\alpha,\beta,a)$ is a Sidon set on $(\mathbb{Z}_{q-1}\times\mathbb{Z}_{q-1},+)$.
\end{proof}
%Note that Construction in~\ref{PropGolomb} is a generalization of Golomb Costas array construction.
\begin{example}
First we apply Proposition~\ref{PropGolomb} to construct a Sidon set on $\langle\mathbb{Z}_{16}\times\mathbb{Z}_{16},+\rangle$. Let $q=p=17$, and let $\alpha=3,\,\beta=5$ be primitive elements of $\mathbb{Z}_{17}^*$. With $a=1$
\begin{align*}
    \mathcal{G}(3,5,1)=
    \left\{
        \begin{array}[c]{l}
            (1,14),(2,10),(3,2),(4,1),(5,4),(6,13),(7,15),(8,6),\\
            (9,12),(10,7),(11,11),(12,5),(13,3),(14,8),(15,9)
        \end{array}
    \right\}
\end{align*}
is a Sidon set on $(\mathbb{Z}_{16}\times\mathbb{Z}_{16},+)$. Now, if $g_1=g_2=2$, using Theorem~\ref{ConsModular},
\[
    \mA=\left\{
            \begin{array}[c]{l}
                (1, 6), (2, 2), (3, 2), (4, 1), (5, 4), (6, 5), (7, 7), (0, 6), \\
                (1, 4), (2, 7), (3, 3), (4, 5), (5, 3), (6, 0), (7, 1)
            \end{array}
          \right\}
\]
is a $B_2[4]$ set on $(\mathbb{Z}_{8}\times \mathbb{Z}_{8},+)$.
\end{example} 
\section{Concluding remarks}
Using the constructions given in this work we can obtain lower bounds and closed formulas for $F_h^d(G,g)$, for some abelian group $G$ and some values of $d,h$ and $g$.

Note from Theorem~\ref{TheCons_xh} and Corollary~\ref{Cor_xh} that $F_2^h(\cF^h_q)\geq q$ for $q$ a prime power. Particularly if $h=2$ and $q=p$ prime we have $F_2^2(\mathbb{Z}_{p}\times \mathbb{Z}_{p})\geq p$, but it is easy to establish that $F_2^2(\mathbb{Z}_{p}\times \mathbb{Z}_{p})=p$ \cite{UIS}. A natural question to state is the following: Can we obtain a similar result, as the last one, on the group $(\cZ_p\times\cZ_p\times\cZ_p,+)$? That is,
\[
    F_2^3(\cZ_p\times \cZ_p\times\cZ_p)\sim p^{3/2}?
\]
Now, using Theorem~\ref{Trujilloconst}, for integers $d,g,N\geq 1$ and $h\geq2$ we know that
\begin{equation*}
    F_h(N^d,g)\leq F_h^d(N,g)
    \label{Implication}
\end{equation*}
Particularly, if $d=2$, $h=2$, and $g=1$ we have that $F_2^1(N^2)\leq F_2^2(N)$, which implies that good constructions of Sidon sets on $\cZ$ give good lower bounds for Sidon sets on $\cZ\times\cZ$. Furthermore, an interesting work consists in to analyze the behavior of the difference $F_2^2(N)-F_2^1(N^2)$ when $N$ grows.

Finally, from Proposition~\ref{PropGolomb} we can establish that $F_2^2(\cZ_{q-1}\times\cZ_{q-1})\geq q-2$, which lead us to wonder if is it possible to state that $F_2^2(\cZ_{q-1}\times\cZ_{q-1})= q-1$?

\vspace{0.5cm}
\textbf{Acknowledgment.} We thank to COLCIENCIAS and Universidad del Cauca for the support to our research group ``Álgebra, Teoría de Números y Aplicaciones--ALTENUA ERM'' under the projects 110356935047 and VRI--3744. This work is dedicated to the memory of Javier Cilleruelo (1961-2016).

%\bibliographystyle{ieeetr}
%\bibliography{NBibInTex}

\end{document}